\newtheorem{thm}{Theorem}[section]
\newtheorem{cor}[thm]{Corollary}
\numberwithin{equation}{section}
\begin{document}

\title{The index of coincidence for the binomial distribution is log-convex}

\author{I. Ra\c{s}a\footnote{Department of Mathematics, Technical University of Cluj-Napoca, Memorandumului Street 28, 400114 Cluj-Napoca, Romania. E-mail: Ioan.Rasa$@$math.utcluj.ro}}
\date{}
\maketitle

\textbf{Abstract}
We consider the binomial distribution with parameters $n$ and $x$,
and show that the sum of the squared probabilities is a log-convex
function of $x$. This completes the proof of a conjecture formulated in
2014. Applications to R\'{e}nyi and Tsallis entropies are given. \newline
\textbf{Keywords}
binomial distribution, index of coincidence, entropies,
log-convex function.
\newline
\textbf{MSC}
26A51, 94A17, 26D05.

\section{Introduction}
A family $\left ( p_{n,k}^{[c]}(x) \right )_{k=0,1,\dots}$ of probability distributions was considered in~\cite{3}; see also the references therein. The numbers $n>0$ and $c$ are real, subject to some conditions, and $x$ is a parameter in a certain interval $I_c$. The cases $c=-1,0,1$ correspond, respectively, to the binomial, Poisson, and negative binomial distributions.

It was conjectured in~\cite{4} that the index of coincidence
\begin{equation*}
S_{n,c}(x):= \sum _{k=0}^\infty \left ( p_{n,k}^{[c]}(x) \right )^2, \quad x \in I_c,
\end{equation*}
is logarithmically convex on $I_c$. This conjecture was validated in~\cite{1} for $c\geq 0$.

\section{Main results}
The aim of this note is to prove the conjecture for $c<0$. Without loss of generality we may restrict to the case $c=-1$. Briefly, we shall prove the following
\begin{thm}
The function
\begin{equation*}
F_n(x):=S_{n,-1}(x) = \sum _{k=0}^n \left ( {n \choose k}x^k (1-x)^{n-k} \right )^2
\end{equation*}
is log-convex on $[0,1]$.
\end{thm}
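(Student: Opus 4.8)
The plan is to turn the two–variable problem into a one–variable differential inequality via an integral representation, and then to reduce that inequality to a single Cauchy–Schwarz estimate for a weighted measure. First I would record the representation: reading $p_{n,k}(x)^2$ as $P(X=k)\,P(Y=k)$ for independent $X,Y\sim\mathrm{Bin}(n,x)$ and summing gives $F_n(x)=P(X=Y)=P(X-Y=0)$, and Fourier inversion applied to $X-Y$ (whose increments have characteristic function $1-2x(1-x)(1-\cos\theta)$) yields
\[
F_n(x)=\frac1{2\pi}\int_{-\pi}^{\pi}\bigl(1-2x(1-x)(1-\cos\theta)\bigr)^n\,d\theta .
\]
Setting $s:=-\log|1-2x|\in[0,\infty)$, so that $(1-2x)^2=e^{-2s}$ and $2x(1-x)=\tfrac12(1-e^{-2s})$, one checks $1-2x(1-x)(1-\cos\theta)=e^{-s}(\cosh s+\sinh s\cos\theta)$, hence $F_n(x)=e^{-ns}\Phi(s)$ with $\Phi(s):=\tfrac1{2\pi}\int_{-\pi}^{\pi}(\cosh s-\sinh s\cos\theta)^n\,d\theta$ (the $+$ and $-$ versions agree by $\theta\mapsto\theta+\pi$).

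Next I would reduce $(\log F_n)''\ge0$ to a moment inequality. Since $F_n$ is symmetric about $x=\tfrac12$ and smooth and strictly positive on $[0,1]$, it suffices to prove $(\log F_n)''\ge0$ on $[0,\tfrac12)$ and then invoke symmetry and continuity. On $[0,\tfrac12)$ one has $s'(x)=2e^{s}$, $s''(x)=4e^{2s}$, so with $\Lambda:=(\log\Phi)'$ a short computation gives $(\log F_n)''=4e^{2s}\bigl(\Lambda'(s)+\Lambda(s)-n\bigr)$. Writing $\langle f\rangle:=\tfrac1{2\pi}\int_{-\pi}^{\pi}f\,d\theta$, $w:=\cosh s-\sinh s\cos\theta$, $w':=\partial_s w$, the key algebraic point is $\partial_s^2 w=w$, which makes $\Phi=\langle w^n\rangle$, $\Phi'=n\langle w^{n-1}w'\rangle$, $\Phi''=n(n-1)\langle w^{n-2}(w')^2\rangle+n\langle w^n\rangle$; substituting these into $\Lambda'+\Lambda-n$ collapses the requirement to
\[
B(A-B)+(n-1)(AC-B^2)\ \ge\ 0,\qquad A:=\langle w^n\rangle,\ B:=\langle w^{n-1}w'\rangle,\ C:=\langle w^{n-2}(w')^2\rangle .
\]

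The last step is the one I would actually check carefully. Because $w\ge\cosh s-|\sinh s|=e^{-|s|}>0$, the measure $d\mu:=w^{\,n-2}\,d\theta/(2\pi)$ is a positive finite measure and $A=\int w^2\,d\mu$, $B=\int ww'\,d\mu$, $C=\int (w')^2\,d\mu$. Then: $AC\ge B^2$ is Cauchy–Schwarz for $\mu$; $A-B=\langle w^{n-1}(w-w')\rangle\ge0$ since $w-w'=e^{-s}(1+\cos\theta)\ge0$; and $B\ge0$ because $B=\Phi'(s)/n$ and, rewriting $\cos\theta=(\cosh s-w)/\sinh s$, $\Phi'(s)=\tfrac{n}{\sinh s}\bigl(\cosh s\,\langle w^n\rangle-\langle w^{n-1}\rangle\bigr)\ge0$ — here the ratios $\langle w^{k+1}\rangle/\langle w^{k}\rangle$ are nondecreasing in $k$ (Cauchy–Schwarz), with least value $\langle w\rangle=\cosh s\ge1$, so $\langle w^n\rangle\ge\cosh s\,\langle w^{n-1}\rangle$ and hence $\cosh s\,\langle w^n\rangle\ge\langle w^{n-1}\rangle$; the case $s=0$, where $w\equiv1$, is immediate. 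With all three facts in hand both summands above are $\ge0$, which proves the Theorem.

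The genuinely delicate part is Steps two and three, and it is worth noting why the naive routes do not work: each summand $\binom nk^2x^{2k}(1-x)^{2n-2k}$ is \emph{log-concave}, so log-convexity of $F_n$ has to come from global cancellation, not from a termwise bound; and the hypergeometric ODE satisfied by $\Phi$ (equivalently by $F_n$) is ``circular'' in that feeding it back into $\Lambda'+\Lambda-n$ merely reproduces that expression. What unlocks the problem is the combination of the trigonometric integral, the identity $\partial_s^2 w=w$, and the change of measure $d\mu=w^{n-2}\,d\theta$, after which everything rests on Cauchy–Schwarz plus the sign of $B$. I expect the main effort in a polished write-up to be the bookkeeping in the reduction of Step two and the verification that $B\ge0$.
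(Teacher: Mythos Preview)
Your argument is correct and takes a genuinely different route from the paper's. The paper expresses $F_n$ through the Legendre polynomial $P_n$ via $F_n(x)=(1-2x)^n P_n(t)$ with $t=(2x^2-2x+1)/(1-2x)$, proves by induction a sharp two-sided estimate for $u_n=P_n'/P_n$ on $[1,\infty)$, and then combines that estimate with the Heun differential equation $XX'F_n''+[1+4(n-1)X]F_n'+2nX'F_n=0$ (quoted from \cite{3,4}) to conclude $(F_n')^2\le F_n F_n''$. You instead use a Laplace-type integral and the key identity $\partial_s^2 w=w$ to collapse $(\log F_n)''\ge0$ directly to the moment inequality $B(A-B)+(n-1)(AC-B^2)\ge0$, which then falls to Cauchy--Schwarz together with two elementary sign checks. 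The two proofs are secretly cousins: your $\Phi(s)$ equals $P_n(\cosh s)$ by the Laplace integral for Legendre polynomials, so your verification that $B\ge0$ is in effect a Cauchy--Schwarz proof of $P_n'\ge0$ on $[1,\infty)$, and your moment inequality plays the role of the paper's upper bound on $u_n$. The payoff of your route is self-containment---no appeal to the Heun ODE or to the inductive Legendre bound from \cite{2}---at the cost of setting up the Fourier/probabilistic representation; the paper's route is shorter once one is willing to import those external ingredients.
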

\begin{proof}
The proof is inspired by the method used in~\cite{2} in order to prove that $F_n$ is convex on $[0,1]$.

First, we need the inequalities
\begin{equation}
0 \leq u_n(t) \leq \frac{2n^2}{\sqrt{4n^2(t^2-1)+(t-\sqrt{t^2-1})^2}+t-\sqrt{t^2-1}}, \quad t\geq 1\label{eq:1},
\end{equation}
where $u_n:=P_n'/P_n$ and $P_n(t)$ are the classical Legendre polynomials. The first inequality is well-known (see, e.g. \cite[(1.2)]{2}). We prove the second one by induction with respect to $n$.

It is easy to verify it for $n=1$. Suppose that it is true for a certain $n\geq 1$. Then, according to~\cite{2}, (3.2) and the subsequent remark,
\begin{equation*}
u_{n+1}(t) = (n+1) \frac{n+1+tu_n(t)}{(n+1)t+(t^2-1)u_n(t)}\leq
\end{equation*}
\begin{equation*}
\leq (n+1) \frac{(n+1)\left [ \sqrt{4n^2(t^2-1)+(t-\sqrt{t^2-1})^2} + t - \sqrt{t^2-1} \right ] + 2n^2t}{(n+1)t \left [ \sqrt{4n^2(t^2-1)+(t-\sqrt{t^2-1})^2} + t - \sqrt{t^2-1} \right ] + 2n^2(t^2-1)} \leq
\end{equation*}
\begin{equation*}
\frac{2(n+1)^2}{\sqrt{4(n+1)^2(t^2-1)+(t-\sqrt{t^2-1})^2} + t - \sqrt{t^2-1}},
\end{equation*}
where the last inequality can be proved by a straightforward calculation using, e.g., the substitution $t = y / \sqrt{y^2-1}$, $y>1$. Thus~\eqref{eq:1} is completely proved.

Now let $x\in \left [0, \frac{1}{2}\right )$ and $t = \frac{2x^2-2x+1}{1-2x}\geq 1$. Then, according to \cite[(2.3) and (2.4)]{2},
\begin{equation}
\frac{F'_n(x)}{F_n(x)} = \frac{2\sqrt{t^2-1}}{t-\sqrt{t^2-1}}\left ( u_n(t)-\frac{n}{\sqrt{t^2-1}} \right ).\label{eq:2}
\end{equation}

Denote $X:=x(1-x)$, $X'=1-2x$, and let $z_1<z_2$ be the roots of the equation
\begin{equation*}
XX'z^2 + \left [ 1+4(n-1)X \right ]z+2nX'=0.
\end{equation*}

By using~\eqref{eq:1} and \eqref{eq:2} we obtain $z_1 \leq \frac{F_n'(x)}{F_n(x)}\leq z_2$, hence
\begin{equation}
XX' \frac{(F_n'(x))^2}{F_n(x)} + \left [ 1+4(n-1)X \right ]F'_n(x) + 2nX'F_n(x)\leq 0. \label{eq:3}
\end{equation}

On the other hand, it was proved in \cite[(33)]{3}, \cite[(4.13)]{4} that $F_n$ is a solution of the differential (Heun) equation
\begin{equation}
XX' F''_n(x) + \left [ 1+4(n-1)X \right ]F'_n(x) + 2nX'F_n(x) = 0. \label{eq:4}
\end{equation}

From \eqref{eq:3} and \eqref{eq:4} we get $(F'_n(x))^2 \leq F''_n(x)F_n(x)$, and so $F_n$ is log-convex on $\left [ 0, \frac{1}{2} \right ]$. To conclude the proof, it suffices to remark that $F_n(1-x) = F_n (x)$ for all $x\in [0,1]$.
\end{proof}

The R\'{e}nyi and Tsallis entropies of order 2 corresponding to the binomial distribution are defined, respectively, by $R_n(x) = -\log {F_n(x)}$ and $T_n(x) = 1-F_n(x)$, $x\in [0,1]$.

So we have the following
\begin{cor}
$R_n$ is concave and $T_n$ is log-concave on $[0,1]$.
\end{cor}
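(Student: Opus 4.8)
The plan is to derive both statements directly from the preceding theorem by unwinding the relevant definitions; no new computation is required.

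For $R_n$: the theorem asserts that $\log F_n$ is convex on $[0,1]$, hence $R_n = -\log F_n$ is concave on $[0,1]$ by definition. This settles the first claim immediately.

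For $T_n$: first I would invoke the elementary fact that every log-convex function is convex, so the theorem gives that $F_n$ itself is convex on $[0,1]$; consequently $T_n = 1 - F_n$ is concave on $[0,1]$. Next I would check nonnegativity: since each $\binom{n}{k}x^k(1-x)^{n-k}$ lies in $[0,1]$ and these quantities sum to $1$, we obtain
\[
F_n(x) = \sum_{k=0}^n \Bigl( \binom{n}{k}x^k(1-x)^{n-k} \Bigr)^2 \le \sum_{k=0}^n \binom{n}{k}x^k(1-x)^{n-k} = 1,
\]
so that $T_n \ge 0$ on $[0,1]$. Finally I would use the standard fact that a nonnegative concave function is log-concave: for $a,b \in [0,1]$ and $\lambda \in [0,1]$, concavity of $T_n$ together with the weighted AM--GM inequality yields
\[
T_n\bigl(\lambda a + (1-\lambda)b\bigr) \ge \lambda T_n(a) + (1-\lambda)T_n(b) \ge T_n(a)^{\lambda}\,T_n(b)^{1-\lambda},
\]
which is precisely log-concavity of $T_n$.

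There is no genuine obstacle here; the argument is entirely formal once the theorem is in hand. The only point deserving a word of care is the behaviour at the endpoints $x \in \{0,1\}$, where the distribution degenerates, $F_n = 1$, and $T_n = 0$: the last displayed inequality is then trivial (its right-hand side being $0$), so the conclusion is unaffected. If one prefers to avoid evaluating $\log$ at $0$, one may instead record that $T_n$ is concave and strictly positive on $(0,1)$, hence log-concave there, and note that this suffices for all applications.
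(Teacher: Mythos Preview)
Your argument is correct and matches the paper's approach: the paper presents the corollary as an immediate consequence of the theorem (``So we have the following'') and gives no further proof, and you have simply supplied the expected routine details. The one minor remark is that the convexity of $F_n$ is already cited in the paper (from~\cite{2}), so you could invoke that directly rather than deducing it from log-convexity, but either route is fine.
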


\end{document}